\theoremstyle{theorem}
\newtheorem{Def}{Definition}[section]
\newtheorem{Lem}[Def]{Lemma}
\newtheorem{Thm}[Def]{Theorem}
\newtheorem{Ass}[Def]{Assumption}
\theoremstyle{definition}
\newtheorem{Rem}[Def]{Remark}
\newcommand{\p}{\mathbb{P}}
\newcommand{\e}{\mathbb{E}}
\newcommand{\real}{\mathbb{R}}
\newcommand{\n}{\mathbb{N}}
\newcommand{\1}{{\bf 1}}
\newcommand{\ksm}{K_\sigma}
\newcommand{\hbeta}{\hat{\beta}}
\begin{document}

\title{Strong convergence for the Euler-Maruyama approximation of stochastic differential equations with discontinuous coefficients}
\author
{
	Hoang-Long Ngo\footnote{Hanoi National University of Education, 136 Xuan 
	Thuy - Cau Giay - Hanoi - Vietnam, email: ngolong@hnue.edu.vn}
	$\quad $ and $\quad$ 
	Dai Taguchi\footnote{Ritsumeikan University, 1-1-1 Nojihigashi, Kusatsu, 
	Shiga, 525-8577, Japan, email: dai.taguchi.dai@gmail.com }
}

\date{}
\maketitle
\begin{abstract}
In this paper we study the strong convergence for the Euler-Maruyama approximation of a class of stochastic differential equations whose both drift and diffusion coefficients are possibly discontinuous. 

\textbf{2010 Mathematics Subject Classification}: 60H35; 41A25; 60C30; \\\\
\textbf{Keywords}:
Euler-Maruyama approximation $\cdot$
Strong rate of convergence $\cdot$
Stochastic differential equation $\cdot$
Discontinuous coefficients
\end{abstract}

\section{Introduction} \label{Sec_1}
Let us consider the one-dimensional stochastic differential equation (SDE)
\begin{align}\label{SDE_1}
X_t=
x_0
+\int_0^t b(X_s)ds
+\int_0^t \sigma(X_s)dW_s,
~x_0 \in \real,
~t \in [0,T],
\end{align}
where $W:=(W_t)_{0\leq t \leq T}$ is a standard  Brownian motion defined on a probability space $(\Omega, \mathcal{F},\p)$ with a filtration $(\mathcal{F}_t)_{0\leq t \leq T}$ satisfying the usual conditions. Since the solution of (\ref{SDE_1}) is rarely analytically tractable, one often approximates $X=(X_t)_{0 \leq t \leq T}$ by using the Euler-Maruyama (EM) scheme given by 
\begin{align*}
X_t^{(n)} 
&= x_0 +\int_0^tb\left(X_{\eta _n(s)}^{(n)}\right)ds +\int_0^t \sigma\left(X_{\eta _n(s)}^{(n)}\right) dW_s,~t \in [0,T],
\end{align*}
where $\eta _n(s) = kT/n=:t_k^{(n)}$ if $ s \in \left[kT/n, (k+1)T/n \right)$.

It is well-known that if $b$ and $\sigma$ are Lipschitz continuous, the EM approximation for \eqref{SDE_1} converges at the strong rate of order $1/2$ (see \cite{KP}). On the other hand, when $b$ and $\sigma$ are not Lipschitz continuous, the strong rate is less known and it has been  a subject of extensive study. In the recent articles  \cite{JMY} and \cite{HHJ}, it has been shown that for every arbitrarily slow convergence speed there exist SDEs with infinitely often differentiable and globally bounded coefficients such that neither the EM approximation nor any approximation method based on finitely many observations of the driving Brownian motion can converge in absolute mean to the solution faster than the given speed of convergence.
The  approximation for SDEs with possibly discontinuous drift coefficients was first studied in \cite{G98}. It is shown that if the drift satisfies the monotonicity condition and the diffusion coefficient is Lipschitz continuous, then the EM scheme converges at the rate of $1/4$ in pathwise senses. In \cite{HK}, the strong convergence of EM scheme is shown for SDEs with discontinuous monotone drift coefficients. 
If $\sigma$ is uniformly elliptic and  $(\alpha + 1/2)$-H\"oder continuous, and $b$ is of locally bounded variation, it has been shown that the strong rate of the EM in $L^1$-norm is $n^{-\alpha}$ for $\alpha \in (0,1/2]$ and $(\log n)^{-1}$ for $\alpha=0$  (see \cite{NT_MCOM, NT_2015_2}).
The strong rate of convergence for SDEs whose drift coefficient $b$ is H\"older continuous is studied in \cite{Gyongy, MeTa, NT_2015_2}. The above mentioned papers contain just a few selected results and a number of further and partially significantly improved approximation results for SDEs with irregular coefficients are available in the literature;
see, e.g., \cite{AKU, CS, HaTsu, HJK, KLY, LeSz, MT, NT_2015_1, Y} and the references there in.

In this paper we are interested in strong approximation of SDEs with discontinuous diffusion coefficients. These SDEs appears in many applied domains such as stochastic control and quantitative finance (see \cite{CE, AI}). 
For such SDEs, the existence and uniqueness of solution  was studied in \cite{Nakao, LeGall, CE}; the weak convergence of EM approximation  was shown in \cite{Y}.
To the best of our knowledge, the strong convergence of the EM approximation of SDEs with discontinuous diffusion coefficient  has not been considered before in the literature.
It is worth noting that the key ingredients to establish the strong rate of convergence of EM approximation for SDEs with discontinuous drift are either the Krylov estimate (see \cite{KLY, Gyongy}) or the Gaussian bound estimate for the density of the numerical solution (\cite{Lemaire, NT_MCOM, NT_2015_2}). However, these estimates seem no longer available for SDEs with discontinuous diffusion coefficients. Therefore in this paper we  develop another method, which is based on an argument with local time, to overcome this obstacle. 

The remainder of the paper is structured as follows. In the next section we introduce some notations and assumptions for our framework together with the main results. All proofs are deferred to Section 3.

\section{Main results}
\subsection{Notations}
Throughout this paper the following notations are used.
For any continuous semimartingale $Y$, we denote $L^x_t(Y)$ the symmetric local time of $Y$ up to time $t$ at the level $x \in \real$ (see \cite{LeGall}).
For bounded measurable function $f$ on $\real$, we define $\|f\|_{\infty}:=\sup_{x \in \real} |f(x)|$. We denote by $L^1(\real)$ the space of all integrable functions with respect to Lebesgue measure on $\real$ with semi-norm $\|f\|_{L^1(\real)}:=\int_{\real}|f(x)|dx$. For each $\beta \in (0,1]$ and $\kappa >0$, we denote by $H^{\beta, \kappa}$ the set of all functions $f:\real \to \real$ such that there exists  a measurable subset $S(f)$ of $\real$ satisfying 
\begin{itemize}
\item[(i)]  
$\|f\|_{\beta} := \|f\|_\infty+ \sup_{x<y; [x,y]\cap S(f) = \emptyset} \dfrac{|f(x)-f(y)|}{|x-y|^{\beta}} < \infty$; and
\item[(ii)] $C_{\beta,\kappa}:= \sup_{K\geq 1}\sup_{\varepsilon>0}  \dfrac{\lambda(S(f)^\varepsilon \cap [-K,K])}{K \varepsilon^\kappa} < +\infty$
where $\lambda$ denotes the Lebesgue measure on $\real$ and  
$S(f)^\varepsilon$ is the $\varepsilon$-neighbourhood of $S(f)$, i.e., $S(f)^\varepsilon = \{y \in \real: \text{ there exists } x \in S(f) \text{ such that } |x-y|\leq \varepsilon\}.$
\end{itemize}
Here are some remarks on the class $H^{\beta, \kappa}$.
\begin{Rem}
\begin{enumerate}
\item $H^{\beta, \kappa}$ is a vector space on $\mathbb{R}$, i.e., if $a, b \in \real$ and $f, g \in H^{\beta, \kappa}$ then $af + bg \in H^{\beta, \kappa}$.
\item A bounded function $f$ is called piecewise $\beta$-H\"older if there exist a positive constant $L$ and a sequence $-\infty = s_0 < s_1 < s_2 < \ldots < s_m < s_{m+1}= \infty$ such that 
$|f(u) - f(v)| \leq L|u-v|^\beta$ for any $u,v$ satisfying $s_k < u < v < s_{k+1}$. It is easy to verify that such function $f \in H^{\beta,1}, \ S(f) = \{s_1, \ldots, s_m\}$ and $C_{\beta,  1} \leq 2m$.
\item The following  $\zeta$ is a non-trivial example of function of $H^{\beta, \kappa}$ with $\kappa < 1$.  
For each $\hbeta, \kappa \in (0,1)$, we denote 
\begin{equation}\label{exp:zeta} \zeta(x) = 
\begin{cases}  \frac{x-1}{2x-1} & \text{ if } x \leq 0,\\ 
1+\frac{\log 2}{\log(n+1)}x^{\hbeta} & \text{ if } (n+1)^{-1/(1-\kappa)} \leq x < n^{-1/(1-\kappa)} \text{ and } \ n \in \n, \\ 
\frac{3x+1}{x+1} &\text{ if } x \geq 1.\end{cases}
\end{equation}
 It can be shown that $\zeta$ is a strictly increasing function with an infinite number of discontinuous points which are cumulative at $0$, $\frac{1}{2} < \zeta < 3$, and $\zeta \in H^{\beta,\kappa}$ with $\beta = \frac{1+\hbeta-\kappa}{2-\kappa}$,  $S(\zeta) = \{ n^{-1/(1-\kappa)}, n = 1, 2,\ldots \}$ and $C_{\beta,\kappa} \leq 3$.
\end{enumerate}
\end{Rem}

\subsection{Main results}

We need the following assumptions on the diffusion coefficient $\sigma$.

\begin{Ass}\label{Ass_1}
	\begin{itemize}
		\item[(i)] There exists a bounded and  strictly increasing function $f_{\sigma}$ such that for any $x,y \in \real$,
		\begin{align*}
		|\sigma(x)-\sigma(y)|^2 \leq |f_{\sigma}(x)-f_{\sigma}(y)|.
		\end{align*}
		
		\item[(ii)]  $\sigma$ is bounded and uniformly positive, i.e. there exist positive constants $\overline{\sigma}$ and $\underline{\sigma}$ such that for any $x \in \real$, 
		\begin{align*}
			\underline{\sigma} \leq \sigma(x) \leq \overline{\sigma}.
		\end{align*}
		
		\end{itemize}
\end{Ass}

Le Gall \cite{LeGall} has shown that if $b$ is bounded measurable, and $\sigma$ satisfies Assumption \ref{Ass_1}, then there exists a unique strong solution to SDE \eqref{SDE_1} (see also  \cite{Nakao}). We now give some remarks on the Assumption \ref{Ass_1}.
\begin{Rem}
\begin{enumerate}
\item The function  $\sigma(x) = 1 + \1_{x \geq 0}$ satisfies Assumption \ref{Ass_1} and  belongs to $H^{1,1}$.
\item The function $\zeta$ defined in \eqref{exp:zeta} also satisfies Assumption \ref{Ass_1}.
\item If $a, b >0$ and $\sigma_1, \sigma_2$ satisfies Assumption \ref{Ass_1}, then $a\sigma_1 + b\sigma_2$ also satisfies Assumption \ref{Ass_1}. 
\item Let $f_1, f_2$ be two  strictly increasing, piecewise $1$-H\"older functions. Let $\rho$ be a $1/2$-H\"older continuous function satisfying $0 < \inf_{x \in \real} \rho(x) \leq \sup_{x \in \real} \rho(x) < \infty$.  Then $\sigma := \rho\circ (f_1-f_2)$ is piecewise $1/2$-H\"older and it satisfies Assumption \ref{Ass_1} with $f_\sigma =   C(f_1 + f_2)$ for some positive constant $C$. 
\end{enumerate}
\end{Rem}
We are now in the position to state the main result of this paper. 
\begin{Thm} \label{Main_1}
Let Assumption \ref{Ass_1} hold, and  $b, \sigma \in H^{\beta, \kappa}$ for some $\beta \in (0,1]$ and $\kappa >0$.
\begin{enumerate}
\item[(i)] There exists a constant $C$ such that for all $n \geq 3$,
\begin{equation} \label{logn2}
\sup_{0\leq t \leq T} \e[|X_t - X^{(n)}_t|] \leq \frac{Ce^{C\sqrt{\log \log n}}}{\log n}.
\end{equation}
\item[(ii)] Moreover, if $ b \in L^1(\real)$, then there exists a constant $C$ such that for all $n \geq 3$,
\begin{equation} \label{logn} 
\sup_{0\leq t \leq T} \e[|X_t - X^{(n)}_t|] \leq \frac{C}{\log n}.
\end{equation}
\end{enumerate}
\end{Thm}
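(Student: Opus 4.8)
The plan is to estimate $\e[|U_t|]$ for $U_t:=X_t-X^{(n)}_t$ uniformly in $t\le T$ via the Yamada--Watanabe smoothing of $|\cdot|$. For parameters $\delta>1$ and $\varepsilon\in(0,1)$ I would take an even $\phi_{\delta,\varepsilon}\in C^2(\real)$ with $|x|-\varepsilon\le\phi_{\delta,\varepsilon}(x)\le|x|$, $|\phi_{\delta,\varepsilon}'|\le1$ and
\[
0\le \phi_{\delta,\varepsilon}''(x)\le \frac{2}{|x|\log\delta}\,\1_{\{\varepsilon/\delta\le|x|\le\varepsilon\}},
\]
apply It\^o's formula to $\phi_{\delta,\varepsilon}(U_t)$ and take expectations so that the stochastic integral vanishes. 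This gives
\[
\e[|U_t|]\le \varepsilon+\e\!\int_0^t\!\big|b(X_s)-b(X^{(n)}_{\eta_n(s)})\big|\,ds+\frac{1}{\log\delta}\,\e\!\int_0^t\!\frac{\big(\sigma(X_s)-\sigma(X^{(n)}_{\eta_n(s)})\big)^2}{|U_s|}\,\1_{\{\varepsilon/\delta\le|U_s|\le\varepsilon\}}\,ds,
\]
and the whole problem reduces to estimating the two integrals and then choosing $\varepsilon$ and $\delta$ as functions of $n$. The prefactor $1/\log\delta$ in front of the diffusion term is precisely the budget that produces the $1/\log n$ rate.

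Before estimating these terms I would record two ingredients. First, a one-step bound: since $b,\sigma$ are bounded, $\e[|X^{(n)}_s-X^{(n)}_{\eta_n(s)}|^p]\le C_p n^{-p/2}$. Second --- and this is the substitute for the Gaussian/Krylov density bounds that are unavailable for discontinuous $\sigma$ --- an occupation-time estimate valid for \emph{both} $X$ and $X^{(n)}$: each is a continuous semimartingale with $d\langle\cdot\rangle_s\ge\underline{\sigma}^2\,ds$ and uniformly bounded symmetric local times (bounded coefficients), so by the occupation-time formula $\e\int_0^t\1_{\{Z_s\in A\}}\,ds\le C\,\lambda(A)$ for bounded Borel $A$ and $Z\in\{X,X^{(n)}\}$, with sub-Gaussian moment bounds controlling the range of $A$. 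Combined with property (ii) in the definition of $H^{\beta,\kappa}$, this yields $\e\int_0^t \1_{\{X_s\in S(\sigma)^\varepsilon\}}\,ds\lesssim K\varepsilon^\kappa$ over a window $[-K,K]$, and similarly for $S(b)$.

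For the diffusion integral I would write $\sigma(X_s)-\sigma(X^{(n)}_{\eta_n(s)})=[\sigma(X_s)-\sigma(X^{(n)}_s)]+[\sigma(X^{(n)}_s)-\sigma(X^{(n)}_{\eta_n(s)})]$. The freezing increment is controlled by the one-step bound together with the $\beta$-H\"older bound off $S(\sigma)$ and the thinness estimate for the event that the increment crosses $S(\sigma)$; it contributes a quantity that is polynomially small in $n$ and therefore harmless. For the genuine difference I split according to whether $[X_s\wedge X^{(n)}_s,\,X_s\vee X^{(n)}_s]$ meets $S(\sigma)$. Away from $S(\sigma)$, membership in $H^{\beta,\kappa}$ gives $(\sigma(X_s)-\sigma(X^{(n)}_s))^2\le\|\sigma\|_\beta^2|U_s|^{2\beta}$, so the band integral is $\lesssim \varepsilon^{2\beta-1}/\log\delta$ --- bounded exactly at the borderline $\beta=1/2$, which is the natural regularity forced by Assumption \ref{Ass_1}(i) (an increasing bounded $f_\sigma$ makes $\sigma$ essentially $1/2$-H\"older). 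On the set where the interval meets $S(\sigma)$ I would use boundedness of $\sigma$ and Assumption \ref{Ass_1}(i), and bound the $\1_{\{\varepsilon/\delta\le|U_s|\le\varepsilon\}}/|U_s|$ weight against the symmetric local times $L^r_t(U)$, $r\in[\varepsilon/\delta,\varepsilon]$: the factor $\int_{\varepsilon/\delta}^{\varepsilon}dr/r=\log\delta$ cancels the $1/\log\delta$ prefactor, leaving an occupation contribution $\lesssim K\varepsilon^\kappa$ made small by $\varepsilon\to0$. Choosing $\delta$ a fixed power of $n$ then gives the $1/\log\delta\asymp1/\log n$ rate.

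The main obstacle is exactly this last step: replacing the unavailable density bounds for the Euler scheme by the local-time/occupation argument, and controlling the contribution of the discontinuity set $S(\sigma)$ inside the Yamada--Watanabe band without losing the $1/\log\delta$ gain. The drift integral is comparatively benign: off $S(b)$ one has $|b(X_s)-b(X^{(n)}_s)|\le\|b\|_\beta|U_s|^\beta$, which after the occupation and thinness estimates feeds a (nonlinear) Gronwall inequality for $t\mapsto\e[|U_t|]$; the windowing at scale $K$ needed to apply property (ii) of $H^{\beta,\kappa}$, optimized against the sub-Gaussian tails of $X$ and $X^{(n)}$, forces $K\asymp\sqrt{\log\log n}$ and is, I expect, the origin of the factor $e^{C\sqrt{\log\log n}}$ in \eqref{logn2}. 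When additionally $b\in L^1(\real)$, the drift contribution can instead be bounded directly by $\|b\|_{L^1(\real)}$-weighted occupation integrals, which require no windowing; this removes the extra factor and yields the clean bound \eqref{logn}. Throughout, the delicate point is the joint optimization of $\varepsilon,\delta,K$ against $n$, which I expect to be the most technical part of carrying the plan out.
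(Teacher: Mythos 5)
Your two structural ingredients (one-step moment bound; occupation-time estimate $\e\int_0^T \1_{\{Z_s\in A\}}ds \lesssim \lambda(A)$ for the uniformly elliptic processes $Z\in\{X,X^{(n)}\}$, combined with the thinness condition (ii) of $H^{\beta,\kappa}$) are exactly the paper's substitutes for Krylov/density bounds, and your treatment of the \emph{freezing} increment $\sigma(X^{(n)}_s)-\sigma(X^{(n)}_{\eta_n(s)})$ matches the paper's key Lemma \ref{key_sigma_0}. But the step you flag as the main obstacle --- the genuine difference $\sigma(X_s)-\sigma(X^{(n)}_s)$ inside the Yamada--Watanabe band --- would fail as you propose it. To trade the weight $\1_{\{\varepsilon/\delta\le|U_s|\le\varepsilon\}}/|U_s|$ for local times $L^r_T(U)$ you need the occupation-time formula for $U=X-X^{(n)}$, which converts $d\langle U\rangle_s$-integrals, not $ds$-integrals; and $d\langle U\rangle_s=(\sigma(X_s)-\sigma(X^{(n)}_{\eta_n(s)}))^2ds$ degenerates precisely where the processes couple, so $U$ has no uniform ellipticity. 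Worse, even where the conversion is legitimate, $\int_{\varepsilon/\delta\le|r|\le\varepsilon}L^r_T(U)\,dr/|r|\le 2\log\delta\,\sup_r L^r_T(U)$ leaves, after cancelling your $1/\log\delta$ budget, the quantity $\sup_r\e[L^r_T(U)]$, which is $O(1)$ with no $K\varepsilon^\kappa$ gain: the thinness of $S(\sigma)$ controls the occupation of $X$ or $X^{(n)}$ near $S(\sigma)$, not the local time of the \emph{difference} in the band. The paper instead never splits on $S(\sigma)$ here: it bounds $(\sigma(X_s)-\sigma(X^{(n)}_s))^2\le|f_\sigma(X_s)-f_\sigma(X^{(n)}_s)|$ by Assumption \ref{Ass_1}-(i), smooths $f_\sigma$ to $f_{\sigma,\ell}$, writes the difference quotient via the mean value theorem as $\int_0^1 f'_{\sigma,\ell}(V^{(n)}_s(\theta))\,d\theta$ along the interpolation $V^{(n)}(\theta)=(1-\theta)X+\theta X^{(n)}$ --- which \emph{is} uniformly elliptic --- and then applies the occupation-time formula and the uniform $L^2$ local-time bound (Lemma \ref{local_time}) with $\|f'_{\sigma,\ell}\|_{L^1(\real)}\le 2\|f_\sigma\|_\infty$, yielding $\e[J^{2,n}_T]\le C/\log\delta$ independently of $\beta,\kappa$. (Relatedly, your off-$S(\sigma)$ bound $\varepsilon^{2\beta-1}/\log\delta$ only behaves for $\beta\ge 1/2$, and Assumption \ref{Ass_1}-(i) does not give pointwise $1/2$-H\"older control --- cf. $\sigma=1+\1_{x\ge0}$ --- so it cannot rescue the case $\beta<1/2$ allowed by the theorem.)

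The drift handling has a second genuine gap. Applying Yamada--Watanabe directly to $U$ leaves $\e\int_0^t|b(X_s)-b(X^{(n)}_s)|\,ds$, and your off-$S(b)$ bound $\|b\|_\beta\,\e[|U_s|^\beta]$ feeds the integral inequality $u(t)\le a+C\int_0^t u(s)^\beta ds$ with $\beta<1$; by comparison with $w'=Cw^\beta$ one gets $u(t)\le (a^{1-\beta}+(1-\beta)Ct)^{1/(1-\beta)}$, which tends to the positive constant $((1-\beta)Ct)^{1/(1-\beta)}$ as $a\to0$ --- the nonlinear Gronwall step does not close and yields no convergence at all for $\beta<1$. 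This is exactly why the paper removes the drift: with $\varphi(x)=\int_0^x\exp(-2\int_0^y b/\sigma^2)$ and $Y=\varphi(X)$, $Y^{(n)}=\varphi(X^{(n)})$, the drift-type term $I^{(n)}_t$ involves only the one-step increments $b(X^{(n)}_s)-b(X^{(n)}_{\eta_n(s)})$ and $\sigma^2(X^{(n)}_s)-\sigma^2(X^{(n)}_{\eta_n(s)})$ (using $\varphi''=-2b\varphi'/\sigma^2$), all covered by Lemma \ref{key_sigma_0}, so no Gronwall argument is needed. The hypothesis $b\in L^1(\real)$ in part (ii) is what keeps $\varphi'$ bounded above and below ($C_0=e^{2K_\sigma^2\|b\|_{L^1(\real)}}$), and part (i) follows by localising $b$; your guess that the factor $e^{C\sqrt{\log\log n}}$ comes from a window $K\asymp\sqrt{\log\log n}$ is in the right spirit, but its source is the growth $e^{CK}$ of the drift-removal constant under truncation balanced against the Gaussian tail $e^{-cK^2}$, not the occupation estimate itself.
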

The estimates \eqref{logn2} and \eqref{logn} were obtained in \cite{Gyongy, NT_MCOM, NT_2015_2} under a stronger assumption that $\sigma$ is $1/2$-H\"older continuous on $\mathbb{R}$. 
\section{Proof of main results}

\subsection{Some auxiliary estimates}

In this section, we derive a key estimation (Lemma \ref{key_sigma_0}) for proving the main theorem.
We first introduce the following standard estimation (see Remark 1.2 in \cite{Gyongy}).

\begin{Lem} \label{Lem_1}
	Suppose that $b$ and $\sigma$ are bounded, measurable.
	Then for any $q>0$, there exists $C_q \equiv C(q,\|b\|_{\infty}, \|\sigma\|_{\infty}, T) $ such that for all $n \in \n$,
	\begin{align*}
	\sup_{t \in [0,T]} \e[|X_t^{(n)}-X_{\eta_n(t)}^{(n)}|^q]\leq \frac{C_q}{n^{q/2}}.
	\end{align*}
\end{Lem}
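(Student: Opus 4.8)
The plan is to control the one-step increment directly, using only the boundedness of $b$ and $\sigma$ together with the Burkholder--Davis--Gundy (BDG) inequality. First I would fix $t \in [0,T]$ and write, straight from the definition of the scheme,
\begin{align*}
X_t^{(n)} - X_{\eta_n(t)}^{(n)} = \int_{\eta_n(t)}^t b\left(X_{\eta_n(s)}^{(n)}\right)ds + \int_{\eta_n(t)}^t \sigma\left(X_{\eta_n(s)}^{(n)}\right)dW_s,
\end{align*}
noting that the length of the interval of integration satisfies $t - \eta_n(t) \leq T/n$.

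The core estimate is carried out for $q \geq 2$. Using the elementary inequality $|a+b|^q \leq 2^{q-1}(|a|^q + |b|^q)$ I would split the drift and diffusion contributions. For the drift part, bounding the integrand pointwise by $\|b\|_\infty$ gives, deterministically,
\begin{align*}
\left|\int_{\eta_n(t)}^t b\left(X_{\eta_n(s)}^{(n)}\right)ds\right|^q \leq (t-\eta_n(t))^q \|b\|_\infty^q \leq \left(\frac{T}{n}\right)^q \|b\|_\infty^q.
\end{align*}
For the diffusion part, the BDG inequality followed by the bound $\sigma^2 \leq \|\sigma\|_\infty^2$ yields a constant $c_q$ with
\begin{align*}
\e\left[\left|\int_{\eta_n(t)}^t \sigma\left(X_{\eta_n(s)}^{(n)}\right)dW_s\right|^q\right] \leq c_q \, \e\left[\left(\int_{\eta_n(t)}^t \sigma\left(X_{\eta_n(s)}^{(n)}\right)^2 ds\right)^{q/2}\right] \leq c_q \left(\frac{T}{n}\right)^{q/2}\|\sigma\|_\infty^q.
\end{align*}
Since $(T/n)^q \leq T^{q/2}(T/n)^{q/2}$ for $n \geq 1$, both contributions collapse into a single bound of order $n^{-q/2}$, with constant depending only on $q$, $\|b\|_\infty$, $\|\sigma\|_\infty$ and $T$; because this bound is uniform in $t$, taking the supremum over $t \in [0,T]$ is immediate.

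Finally, for $0 < q < 2$ I would reduce to the case just treated by Lyapunov's inequality (Jensen applied to the concave map $x \mapsto x^{q/2}$), namely $\e[|Z|^q] \leq (\e[|Z|^2])^{q/2}$ with $Z = X_t^{(n)} - X_{\eta_n(t)}^{(n)}$, which turns the $q=2$ bound of order $n^{-1}$ into one of order $n^{-q/2}$. I do not expect a genuine obstacle here: this is the standard one-step regularity estimate for the Euler--Maruyama scheme, and the only point deserving a little care is the split by the value of the exponent, so that BDG is invoked where it is cleanest (for $q \geq 2$) while the range $0 < q < 2$ is recovered by Lyapunov's inequality.
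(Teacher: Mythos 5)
Your proof is correct and coincides with the argument the paper has in mind: the paper does not prove Lemma \ref{Lem_1} at all, citing it as a standard estimate (Remark 1.2 of \cite{Gyongy}), and your decomposition of the one-step increment, with the deterministic bound $\|b\|_\infty T/n$ for the drift, the Burkholder--Davis--Gundy inequality for the stochastic integral when $q \geq 2$, and Jensen's inequality to recover $0<q<2$, is precisely that standard proof. All steps check out, including the absorption $(T/n)^q \leq T^{q/2}(T/n)^{q/2}$ and the uniformity in $t$ of the resulting constant.
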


The next estimation is a uniform $L^2$-bounded of the local time of solution of SDE \eqref{SDE_1} and its EM approximation.

\begin{Lem}\label{local_time}
	Suppose that $b$ is bounded, measurable and $\sigma$ is measurable and satisfies Assumption \ref{Ass_1}-(ii).
	For each $\theta \in [0,1]$, define
	\begin{align*}
	&V_t^{(n)}(\theta):=(1-\theta)X_t+\theta X_t^{(n)}.\\
	&=x_0
	+\int_{0}^{t} \left\{ (1-\theta)b(X_s) + \theta b(X_{\eta_n(s)}^{(n)}) \right\} ds
	+\int_{0}^{t} \left\{ (1-\theta)\sigma(X_s) + \theta \sigma(X_{\eta_n(s)}^{(n)}) \right\} dW_s.
	\end{align*}
	Then it holds that
	\begin{align}\label{esti_local_time_0}
	\sup_{\theta \in [0,1], x \in \real}
	\e[|L_T^x(V^{(n)}(\theta))|^2]
	&\leq 12\|b\|_{\infty}^2T^2+ 6 \overline{\sigma}^2 T.
	\end{align}
\end{Lem}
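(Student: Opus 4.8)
The plan is to apply Tanaka's formula for the symmetric local time to the semimartingale $V^{(n)}(\theta)$, exploit the nonnegativity of local time to turn the resulting identity into an upper bound, and then estimate each term using the boundedness hypotheses together with It\^o's isometry. To lighten notation write $V = V^{(n)}(\theta)$ and abbreviate its finite-variation and martingale integrands by
$$
\beta_s := (1-\theta)b(X_s) + \theta b(X^{(n)}_{\eta_n(s)}), \qquad \gamma_s := (1-\theta)\sigma(X_s) + \theta \sigma(X^{(n)}_{\eta_n(s)}),
$$
so that $dV_s = \beta_s\, ds + \gamma_s\, dW_s$. Since $b$ is bounded we have $|\beta_s| \le \|b\|_\infty$, and Assumption \ref{Ass_1}-(ii) gives $\underline{\sigma} \le \gamma_s \le \overline{\sigma}$, in particular $|\gamma_s| \le \overline{\sigma}$; both bounds are uniform in $\theta$ and $\omega$.

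First I would write Tanaka's formula for the symmetric local time at level $x$,
$$
L_T^x(V) = |V_T - x| - |x_0 - x| - \int_0^T \sign(V_s - x)\,\beta_s\, ds - \int_0^T \sign(V_s - x)\,\gamma_s\, dW_s,
$$
valid for every $x \in \real$. Because $L_T^x(V) \ge 0$, it equals its own absolute value, and using the reverse triangle inequality $\big||V_T - x| - |x_0 - x|\big| \le |V_T - x_0|$ together with the triangle inequality I would bound
$$
0 \le L_T^x(V) \le \Big|\int_0^T \beta_s\, ds\Big| + \Big|\int_0^T \gamma_s\, dW_s\Big| + \Big|\int_0^T \sign(V_s - x)\beta_s\, ds\Big| + \Big|\int_0^T \sign(V_s - x)\gamma_s\, dW_s\Big|.
$$
The two drift integrals are each at most $\|b\|_\infty T$, so they combine into $2\|b\|_\infty T$, leaving
$$
L_T^x(V) \le 2\|b\|_\infty T + \Big|\int_0^T \gamma_s\, dW_s\Big| + \Big|\int_0^T \sign(V_s - x)\gamma_s\, dW_s\Big|.
$$

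Next, applying $(a+b+c)^2 \le 3(a^2+b^2+c^2)$, taking expectations, and invoking It\^o's isometry I would estimate
$$
\e\big[|L_T^x(V)|^2\big] \le 3\Big(4\|b\|_\infty^2 T^2 + \e\Big[\int_0^T \gamma_s^2\, ds\Big] + \e\Big[\int_0^T \sign(V_s-x)^2\gamma_s^2\, ds\Big]\Big).
$$
Since $\gamma_s^2 \le \overline{\sigma}^2$ and $\sign(\cdot)^2 \le 1$, each stochastic-integral term is bounded by $\overline{\sigma}^2 T$, which yields $\e[|L_T^x(V)|^2] \le 12\|b\|_\infty^2 T^2 + 6\overline{\sigma}^2 T$; as every bound used is uniform in $\theta \in [0,1]$ and $x \in \real$, taking the supremum gives \eqref{esti_local_time_0}.

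I do not anticipate a serious obstacle here: the computation is essentially forced once Tanaka's formula is written down. The one genuine idea is to discard the level $x$ in the boundary terms via $\big||V_T-x|-|x_0-x|\big| \le |V_T - x_0|$, which is exactly what makes the estimate uniform in $x$; the uniformity in $\theta$ is automatic because the integrand bounds $|\beta_s| \le \|b\|_\infty$ and $|\gamma_s| \le \overline{\sigma}$ hold for every convex combination. I would only need to be slightly careful that the \emph{symmetric} local time and the symmetric $\sign$ convention (with $\sign(0)=0$) are the ones for which Tanaka's formula holds without an extra local-time correction term.
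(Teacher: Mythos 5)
Your proposal is correct and follows essentially the same route as the paper: the symmetric It\^o--Tanaka formula, the bound $\bigl||V_T-x|-|x_0-x|\bigr|\leq |V_T-x_0|$ to remove the dependence on the level $x$, combining the two drift integrals into $2\|b\|_\infty T$, and then $(a+b+c)^2\leq 3(a^2+b^2+c^2)$ with It\^o's isometry, yielding the identical constant $12\|b\|_\infty^2T^2+6\overline{\sigma}^2T$. Your closing remark about the symmetric sign convention ($\sign(0)=0$) matching the symmetric local time is exactly the convention the paper uses via $\1(V_s>x)-\1(V_s<x)$.
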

\begin{proof}
	By using the symmetric It\^o-Tanaka formula, we have
	\begin{align*}
	L_T^x(V^{(n)}(\theta))
	&=|V_T^{(n)}(\theta)-x|-|x_0-x|-\int_0^T \left( \1(V_s^{(n)}(\theta)>x)-\1(V_s^{(n)}(\theta)<x) \right) dV_s^{(n)}(\theta)\\
	&\leq |V_T^{(n)}(\theta)-x_0|+\left| \int_0^T \left( \1(V_s^{(n)}(\theta)>x)-\1(V_s^{(n)}(\theta)<x) \right) dV_s^{(n)}(\theta) \right|\\
	&\leq 2\int_{0}^{T} \left| (1-\theta)b(X_s) + \theta b(X_{\eta_n(s)}^{(n)}) \right| ds
	+\left| \int_{0}^{T} \left\{ (1-\theta)\sigma(X_s) + \theta \sigma(X_{\eta_n(s)}^{(n)}) \right\} dW_s\right|\\
	&+\left| \int_{0}^{T} \left( \1(V_s^{(n)}(\theta)>x)-\1(V_s^{(n)}(\theta)<x) \right) \left\{ (1-\theta)\sigma(X_s) + \theta \sigma(X_{\eta_n(s)}^{(n)}) \right\} dW_s\right|.
	\end{align*}
	Since $b$ and $\sigma$ are bounded, it follows from inequality $(a+b+c)^2\leq 3(a^2+b^2+c^2)$  and the $L^2$-isometry that,
	\begin{align*}
	\sup_{\theta \in [0,1], x \in \real}
	\e[|L_T^x(V^{(n)}(\theta))|^2] \notag
	&\leq 
	12\|b\|_{\infty}^2T^2
	+ 6 \sup_{\theta \in [0,1], x \in \real}\int_0^T \e \Big[ \big| (1 - \theta) \sigma(X_s) + \theta \sigma(X^{(n)}_{\eta(s)})\big|^2 \Big] ds\\
	&\leq 12\|b\|_{\infty}^2T^2 + 6 \overline{\sigma}^2T.
	\end{align*}
	This concludes the statement.
\end{proof}

The following lemma, which is similar to Lemma 2.2 in \cite{Y}, plays a crucial role in our argument.

\begin{Lem}\label{tight_1}
	Assume that $b$ and $\sigma$ are bounded measurable.
	For any $\varepsilon, \chi>0$ such that $\delta:=\frac{\chi \varepsilon^4}{8(T\|b\|_{\infty}^4+2^7\overline{\sigma}^4)}\leq T$, it holds that for any $t \geq 0$ and $n \in \n$,
$		\p(\sup_{t \leq r \leq t+\delta}|X_r^{(n)}-X_t^{(n)}| \geq \varepsilon) \leq \delta \chi.$
\end{Lem}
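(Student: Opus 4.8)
The plan is to split the increment $X_r^{(n)}-X_t^{(n)}$ into its drift and martingale parts and to control each at the level of \emph{fourth} moments, the fourth power being dictated by the factor $\varepsilon^4$ in the definition of $\delta$ (a second-moment estimate would produce only $\varepsilon^2$). For $r\in[t,t+\delta]$ write
\begin{align*}
X_r^{(n)}-X_t^{(n)}
=\int_t^r b\big(X_{\eta_n(s)}^{(n)}\big)\,ds
+\int_t^r \sigma\big(X_{\eta_n(s)}^{(n)}\big)\,dW_s
=:A_r+M_r,
\end{align*}
so that $A$ is of bounded variation with the deterministic bound $\sup_{t\le r\le t+\delta}|A_r|\le\|b\|_{\infty}\delta$, while $M=(M_r)_{r\ge t}$ is a continuous $L^2$-martingale whose integrand is bounded by $\overline{\sigma}$ (here only boundedness of $\sigma$ is needed, as granted by Assumption \ref{Ass_1}-(ii)).

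For the martingale part I would first apply Doob's $L^4$-maximal inequality and then estimate the fourth moment of the endpoint by It\^o's formula applied to $x\mapsto x^4$ (equivalently the Burkholder--Davis--Gundy inequality), using $\langle M\rangle_{t+\delta}-\langle M\rangle_t=\int_t^{t+\delta}\sigma^2(X_{\eta_n(s)}^{(n)})\,ds\le\overline{\sigma}^2\delta$:
\begin{align*}
\e\Big[\sup_{t\le r\le t+\delta}|M_r|^4\Big]
\le\Big(\tfrac{4}{3}\Big)^4\e\big[|M_{t+\delta}|^4\big]
\le\Big(\tfrac{4}{3}\Big)^4\cdot 3\,\overline{\sigma}^4\delta^2
\le 16\,\overline{\sigma}^4\delta^2 .
\end{align*}
The decisive feature is that the martingale contributes a \emph{quadratic} factor $\delta^2$; this is exactly what makes the final probability linear in $\delta$ and matches the $\varepsilon^4$ appearing in the threshold.

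Combining the two parts through $(a+b)^4\le 8(a^4+b^4)$ gives
\begin{align*}
\e\Big[\sup_{t\le r\le t+\delta}|X_r^{(n)}-X_t^{(n)}|^4\Big]
\le 8\|b\|_{\infty}^4\delta^4+2^7\,\overline{\sigma}^4\delta^2,
\end{align*}
and Markov's inequality yields $\p\big(\sup_{t\le r\le t+\delta}|X_r^{(n)}-X_t^{(n)}|\ge\varepsilon\big)\le\big(8\|b\|_{\infty}^4\delta^4+2^7\overline{\sigma}^4\delta^2\big)/\varepsilon^4$. To close the estimate I would invoke the standing restriction $\delta\le T$ to dominate the higher-order drift term, bounding $8\|b\|_{\infty}^4\delta^4$ by $8T\|b\|_{\infty}^4\delta^2$, which turns the right-hand side into $8\delta^2\big(T\|b\|_{\infty}^4+2^7\overline{\sigma}^4\big)/\varepsilon^4$; by the very choice of $\delta$ this equals $\delta\chi$. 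The whole argument is uniform in $t$ and $n$, since the pathwise drift bound and the quadratic-variation bound depend only on $\|b\|_{\infty}$ and $\overline{\sigma}$.

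The single genuinely quantitative input is the $\delta^2$-estimate for $\sup_{r}|M_r|^4$, which is where the martingale structure and boundedness of $\sigma$ are used; everything else is bookkeeping of numerical constants to reproduce the exact threshold $\delta$, and the hypothesis $\delta\le T$ is consumed precisely in absorbing the surplus power of $\delta$ coming from the drift into a single factor $T$.
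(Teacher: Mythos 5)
Your proof is correct in outline and follows essentially the same route as the paper: both arguments bound the fourth moment of $\sup_{t\le r\le t+\delta}|X^{(n)}_r-X^{(n)}_t|$ by splitting the increment into drift and martingale parts, and conclude via Markov's inequality applied to the fourth power. The only difference is cosmetic: you control the martingale part by Doob's $L^4$ maximal inequality together with the It\^o moment bound $\e[|M_{t+\delta}|^4]\le 3\overline{\sigma}^4\delta^2$, whereas the paper applies the Burkholder--Davis--Gundy inequality directly to $\e[\sup_s|\int\sigma\,dW|^4]$; your constants come out slightly smaller ($2^7\overline{\sigma}^4\delta^2$ versus the paper's $2^{10}\overline{\sigma}^4\delta^2$ for the martingale term), which is harmless since the stated $\delta$ then works a fortiori.

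There is, however, one quantitative slip in your closing step: from $\delta\le T$ you may conclude $\delta^4\le T^2\delta^2$, but not $\delta^4\le T\delta^2$ --- the latter requires $\delta\le\sqrt{T}$ and fails when $T>1$. Consequently, what your argument actually establishes is the lemma with $\delta=\chi\varepsilon^4/\bigl(8(T^2\|b\|_\infty^4+2^7\overline{\sigma}^4)\bigr)$. This does not reflect badly on your argument: the $T$ in the statement of Lemma \ref{tight_1} is a typo for $T^2$ --- the paper's own proof defines $\delta$ with $T^2\|b\|_\infty^4$ in the denominator, as does the definition of $\delta_n$ and $\widetilde{c}$ in Lemma \ref{tight_3}, and it is the $T^2$ version that the rest of the paper uses. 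With the exponent corrected (i.e., bounding $8\|b\|_\infty^4\delta^4\le 8T^2\|b\|_\infty^4\delta^2$), your proof is complete and matches the paper's.
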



\begin{proof}
	Let $t\in [0,T]$ be fixed.
	We define $Z_s^{(n)}:=X_{t+s}^{(n)}-X_{t}^{(n)}$.
	Then using Burkholder-Davis-Gundy's inequality, it holds that for any $\delta \in [0,T]$,
	\begin{align*}
		\e\left[\sup_{0\leq s \leq \delta}|Z_s^{(n)}|^4\right]
		&\leq 8 \e\left[\sup_{0 \leq s \leq \delta} \left|\int_{t}^{t+s} b(X_{\eta_n(r)}^{(n)})dr\right|^4\right]
		+8 \e\left[\sup_{0 \leq s \leq \delta} \left|\int_{t}^{t+s} \sigma(X_{\eta_n(r)}^{(n)})dW_r\right|^4\right]\\
		&\leq 8 \delta^3 \e\left[ \int_{t}^{t+\delta} \left| b(X_{\eta_n(r)}^{(n)})\right|^4 dr\right]
		+2^{10} \delta \e\left[ \int_{t}^{t+\delta} \left|\sigma(X_{\eta_n(r)}^{(n)})\right|^4dr\right]\\
		&\leq 8 \|b\|^4_{\infty} \delta^4
		+2^{10} \overline{\sigma}^4 \delta^2
		\leq 8\left( \|b\|^4_{\infty}T^2+ 2^7 \overline{\sigma}^4 \right) \delta^2.
	\end{align*}
	Hence, for any $\varepsilon, \chi>0$ such that $\delta:=\frac{\chi \varepsilon^4}{8(T^2\|b\|_{\infty}^4+2^7\overline{\sigma}^4)} \leq T$, from Markov's inequality, we have
		\begin{align*}
		\p\left(\sup_{t \leq s \leq t+\delta}|X_s^{(n)}-X_t^{(n)}| \geq \varepsilon \right)
		&\leq \frac{1}{\varepsilon^4} \e\left[\sup_{t \leq s \leq t+\delta}|X_s^{(n)}-X_t^{(n)}|^4\right]
		= \frac{1}{\varepsilon^4} \e\left[\sup_{0 \leq s \leq \delta} |Z_s^{(n)}|^4 \right]\\
		&\leq \frac{8\left( \|b\|^4_{\infty}T^2+ 2^7 \overline{\sigma}^4 \right) \delta^2}{\varepsilon^4}
		=\delta \chi,
	\end{align*}
	which concludes the statement.
\end{proof}

Lemma \ref{tight_1} directly implies the following result.
\begin{Lem}\label{tight_3}
	Assume that $b$ and $\sigma$ are bounded measurable.
	Let $(\gamma_n)_{n\in\n}$ be a decreasing sequence such that $\gamma_n \in (0,1]$ and $\gamma_n \downarrow 0 $ and $\gamma_n n^2 \to \infty$ as $n \to \infty$.
	Denote 
$		\varepsilon_n
		:=\frac{\widetilde{c}}{\gamma_n^{1/4} n^{1/2}},~
		\widetilde{c}
		:=2^{3/4}T^{1/2}\{T^2\|b\|_{\infty}^4+ 2^7 \overline{\sigma}^4\}^{1/4},
		\chi_n:=\frac{\gamma_n n}{T},
		\delta_n:=\frac{\chi_n \varepsilon_n^4}{8(T^2\|b\|_{\infty}^4+ 2^7 \overline{\sigma}^4)}=\frac{T}{n}.$
	For each $k=0,\ldots,n-1$, we define 
	\begin{align*}
	\Omega_{k,n,\varepsilon_n}
	:=\left\{ \omega \in \Omega \bigg| \sup_{t_k^{(n)} \leq s \leq t_{k+1}^{(n)}}|X_s^{(n)}(\omega)-X_{t_k^{(n)}}^{(n)}(\omega)| \geq \varepsilon_n \right\}.
	\end{align*}
	Then it holds that
	$\p(\Omega_{k,n,\varepsilon_n}) \leq \delta_n \chi_n = \gamma_n$.
\end{Lem}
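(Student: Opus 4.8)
The plan is to apply Lemma~\ref{tight_1} directly with the specific choices $t = t_k^{(n)}$, $\varepsilon = \varepsilon_n$ and $\chi = \chi_n$, so the whole argument reduces to checking that these substitutions meet the hypotheses of Lemma~\ref{tight_1} and reproduce exactly the claimed bound. The conceptual point driving everything is that the mesh size $t_{k+1}^{(n)} - t_k^{(n)}$ equals $T/n$, and the constants $\widetilde c$ and $\chi_n$ have been calibrated precisely so that the quantity $\delta$ produced by Lemma~\ref{tight_1} from the pair $(\varepsilon_n,\chi_n)$ lands on this same value. Thus the first thing I would record is the trivial identity $t_{k+1}^{(n)}-t_k^{(n)}=T/n$.

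Next I would verify the calibration. Raising $\varepsilon_n$ to the fourth power gives $\varepsilon_n^4 = \widetilde c^{\,4}/(\gamma_n n^2)$ with $\widetilde c^{\,4} = 8T^2(T^2\|b\|_\infty^4 + 2^7\overline\sigma^4)$, and substituting into the defining formula for $\delta$ yields
\[
\frac{\chi_n \varepsilon_n^4}{8(T^2\|b\|_\infty^4 + 2^7\overline\sigma^4)}
= \frac{\gamma_n n}{T}\cdot\frac{8T^2(T^2\|b\|_\infty^4 + 2^7\overline\sigma^4)}{\gamma_n n^2}\cdot\frac{1}{8(T^2\|b\|_\infty^4 + 2^7\overline\sigma^4)}
= \frac{T}{n} = \delta_n .
\]
So the $\delta$ attached by Lemma~\ref{tight_1} to $(\varepsilon_n,\chi_n)$ is exactly $\delta_n=T/n$, which coincides with the grid spacing; the supremum defining $\Omega_{k,n,\varepsilon_n}$ over $[t_k^{(n)},t_{k+1}^{(n)}]$ is therefore the supremum over $[t_k^{(n)},t_k^{(n)}+\delta_n]$ appearing in Lemma~\ref{tight_1}.

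Before invoking the lemma I would confirm its admissibility condition $\delta\le T$: here $\delta_n = T/n\le T$ for every $n\in\n$, while $\varepsilon_n,\chi_n>0$ because $\gamma_n\in(0,1]$, so all hypotheses hold. Applying Lemma~\ref{tight_1} with $t=t_k^{(n)}$ then gives
\[
\p(\Omega_{k,n,\varepsilon_n})
= \p\Big(\sup_{t_k^{(n)}\le s\le t_k^{(n)}+\delta_n}|X_s^{(n)}-X_{t_k^{(n)}}^{(n)}|\ge \varepsilon_n\Big)
\le \delta_n\chi_n,
\]
and the final identity $\delta_n\chi_n = (T/n)(\gamma_n n/T)=\gamma_n$ closes the argument.

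I do not expect a genuine obstacle, since the statement is essentially a bookkeeping corollary: the three sequences $\varepsilon_n$, $\chi_n$, $\delta_n$ are engineered so that $\delta_n$ matches the grid spacing and $\delta_n\chi_n=\gamma_n$. The only steps needing care are the exact evaluation of $\widetilde c^{\,4}$ and the cancellation of the factor $8(T^2\|b\|_\infty^4 + 2^7\overline\sigma^4)$. It is worth remarking that the hypothesis $\gamma_n n^2\to\infty$ is not actually used in this estimate; it serves only to guarantee $\varepsilon_n\downarrow 0$, which will be relevant in the subsequent applications of this lemma rather than here.
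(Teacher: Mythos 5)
Your proof is correct and is exactly the argument the paper intends: the paper gives no separate proof, stating only that Lemma~\ref{tight_1} ``directly implies'' the result, and your verification that $\widetilde c^{\,4}=8T^2(T^2\|b\|_\infty^4+2^7\overline\sigma^4)$ makes $\delta$ collapse to the grid spacing $T/n$ with $\delta_n\chi_n=\gamma_n$ is precisely the intended bookkeeping (your side remark that $\gamma_n n^2\to\infty$ is not needed here is also accurate).
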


Now we state the a key lemma of our demonstration. 
\begin{Lem}\label{key_sigma_0}
	Let Assumption \ref{Ass_1}-(ii) hold and the drift coefficient $b$ be bounded and measurable.
	Let $f \in H^{\beta,\kappa}$ for some $\beta \in (0,1]$. Then for any $p\geq 1$ and $0< \alpha < \frac{p\beta}{2} \wedge \frac{2\kappa}{\kappa+4}$, there exists a positive constant $C_p^*(f)= C^*(p,\alpha, \beta, \kappa,T,x_0,\|f\|_\beta, C_{\beta,\kappa}, \|b\|_\infty, \overline{\sigma}, \underline{\sigma})$ which does not depend on $n$ such that for each $n \geq 3$,
\begin{align} \label{eqnL0}
\int_{0}^{T}\e\left[\left|f(X_s^{(n)})-f(X_{\eta_n(s)}^{(n)})\right|^p \right]ds \leq \frac{C_p^*(f)}{n^\alpha \log n}.
\end{align}
\end{Lem}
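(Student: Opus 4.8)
The plan is to estimate the integrand pointwise in $s$ by splitting according to whether the random segment joining $X^{(n)}_{\eta_n(s)}$ and $X^{(n)}_s$ meets the singular set $S(f)$, and, in the latter case, whether the increment is large or small. Fix $s$ and let $k$ be the index with $\eta_n(s)=t_k^{(n)}$. On the event $A_s:=\{[X^{(n)}_{\eta_n(s)}\wedge X^{(n)}_s,\ X^{(n)}_{\eta_n(s)}\vee X^{(n)}_s]\cap S(f)=\emptyset\}$ property (i) of $H^{\beta,\kappa}$ gives the pointwise bound $|f(X^{(n)}_s)-f(X^{(n)}_{\eta_n(s)})|^p\le\|f\|_\beta^p|X^{(n)}_s-X^{(n)}_{\eta_n(s)}|^{p\beta}$, so after dropping the indicator and invoking Lemma \ref{Lem_1} with $q=p\beta$, this part of the integral is at most $Cn^{-p\beta/2}$. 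On the complementary event I use the crude bound $|f(X^{(n)}_s)-f(X^{(n)}_{\eta_n(s)})|^p\le(2\|f\|_\infty)^p$ and split once more using the oscillation events $\Omega_{k,n,\varepsilon_n}$ of Lemma \ref{tight_3}.

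On $\Omega_{k,n,\varepsilon_n}$ I simply use $\p(\Omega_{k,n,\varepsilon_n})\le\gamma_n$; summing over the $n$ intervals and weighting by their length $T/n$ contributes a term of order $\gamma_n$. On $\Omega_{k,n,\varepsilon_n}^c$ the oscillation is controlled, so $|X^{(n)}_s-X^{(n)}_{\eta_n(s)}|<\varepsilon_n$, and combined with the fact that the segment meets $S(f)$ this forces $X^{(n)}_s\in S(f)^{\varepsilon_n}$. Hence the remaining contribution is bounded by $(2\|f\|_\infty)^p\int_0^T\p(X^{(n)}_s\in S(f)^{\varepsilon_n})\,ds$, and this is where the local time enters. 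To estimate the occupation of $S(f)^{\varepsilon_n}$ I would apply the occupation times formula to $X^{(n)}$, whose bracket is $d\langle X^{(n)}\rangle_s=\sigma(X^{(n)}_{\eta_n(s)})^2\,ds$; using $\sigma\ge\underline\sigma$ (Assumption \ref{Ass_1}-(ii)) yields $\int_0^T\1_{S(f)^{\varepsilon_n}}(X^{(n)}_s)\,ds\le\underline\sigma^{-2}\int_{S(f)^{\varepsilon_n}}L_T^a(X^{(n)})\,da$.

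Taking expectations and using the uniform bound $\sup_a\e[L_T^a(X^{(n)})]\le C$ from Lemma \ref{local_time} (with $\theta=1$), I split the $a$-integral over $[-K_n,K_n]$ and its complement. On $[-K_n,K_n]$ property (ii) gives $\lambda(S(f)^{\varepsilon_n}\cap[-K_n,K_n])\le C_{\beta,\kappa}K_n\varepsilon_n^\kappa$; on the complement I use the occupation times identity again with $\sigma\le\overline\sigma$ together with the sub-Gaussian tail $\p(\sup_{s\le T}|X^{(n)}_s|>K_n)\le Ce^{-cK_n^2}$, which holds since $b,\sigma$ are bounded. Choosing $K_n\asymp\sqrt{\log n}$ makes the tail negligible, so this piece is of order $\sqrt{\log n}\,\varepsilon_n^\kappa$. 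Collecting the three contributions and recalling $\varepsilon_n\asymp\gamma_n^{-1/4}n^{-1/2}$, the whole integral is bounded by $C\bigl(n^{-p\beta/2}+\gamma_n+\sqrt{\log n}\,\gamma_n^{-\kappa/4}n^{-\kappa/2}\bigr)$.

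Finally I would take $\gamma_n=n^{-\rho}$ and optimise $\rho\in(0,2)$: the smooth term has fixed exponent $\tfrac{p\beta}{2}$, the oscillation term exponent $\rho$, and the local-time term exponent $\tfrac{\kappa(2-\rho)}{4}$ up to the harmless $\sqrt{\log n}$; balancing the last two at $\rho=\tfrac{2\kappa}{\kappa+4}$ shows the best achievable polynomial exponent is exactly $\tfrac{p\beta}{2}\wedge\tfrac{2\kappa}{\kappa+4}$. Since the claimed $\alpha$ lies strictly below this value, I can pick $\rho$ so that the bound reads $\int\le Cn^{-\alpha'}$ for some $\alpha'\in(\alpha,\tfrac{p\beta}{2}\wedge\tfrac{2\kappa}{\kappa+4})$; as $n^{-(\alpha'-\alpha)}\log n$ is bounded on $n\ge3$, this gives $\int\le C_p^*(f)\,n^{-\alpha}/\log n$, as asserted. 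The main obstacle is the middle step, namely bounding uniformly in $n$ the time that $X^{(n)}$ spends in the shrinking neighbourhood $S(f)^{\varepsilon_n}$; the local-time occupation estimate, property (ii), and the tail bound together resolve it, while the extra $\log n$ factor is then a free consequence of the strict inequality for $\alpha$.
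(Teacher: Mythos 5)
Your proposal is correct and follows essentially the same route as the paper's proof: the same three-way splitting via the oscillation events $\Omega_{k,n,\varepsilon_n}$ of Lemma \ref{tight_3} (your partition by whether the segment meets $S(f)$ is the logical mirror image of the paper's partition by $X^{(n)}_s \in S^{\varepsilon_n}(f)$), the same use of Lemma \ref{Lem_1}, the occupation time formula with ellipticity, the local-time bound of Lemma \ref{local_time}, property (ii) on $[-K_n,K_n]$ with $K_n \asymp \sqrt{\log n}$, and a sub-Gaussian tail for the remainder. The only differences are cosmetic: you handle the tail region $|a|>K_n$ through the local time rather than bounding $\p(|X^{(n)}_s|\geq K_n)$ directly, and you take $\gamma_n = n^{-\rho}$ and absorb the $\log n$ via the strict inequality $\alpha' > \alpha$, whereas the paper sets $\gamma_n = n^{-\alpha}/\log n$ outright --- both tunings are valid.
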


\begin{proof}
	From Lemma \ref{tight_3} and the boundedness of $f$, it holds that
	\begin{align}\label{key_sigma_1}
		&\int_{0}^{T}\e\left[\left|f(X_s^{(n)})-f(X_{\eta_n(s)}^{(n)}) \right|^p\right]ds	\notag
		\\
		&=\sum_{k=0}^{n-1}\int_{t_{k}^{(n)}}^{t_{k+1}^{(n)}}\e\left[\left|f(X_s^{(n)})-f(X_{t_{k}^{(n)}}^{(n)}) \right|^p \left(\1_{\Omega_{k,n,\varepsilon_n}}+\1_{\Omega_{k,n,\varepsilon_n}^c} \right)\right]ds \notag\\
		&\leq 2^p\|f\|_{\infty}^pT  \gamma_n  
		+\sum_{k=0}^{n-1}\int_{t_{k}^{(n)}}^{t_{k+1}^{(n)}}\e\left[\left|f(X_s^{(n)})-f(X_{t_{k}^{(n)}}^{(n)}) \right|^p \1_{\Omega_{k,n,\varepsilon_n}^c} \right]ds.
	\end{align}
We estimate the second term of \eqref{key_sigma_1} as follows
\begin{align}
&\sum_{k=0}^{n-1}\int_{t_{k}^{(n)}}^{t_{k+1}^{(n)}}\e\left[\left|f(X_s^{(n)})-f(X_{t_{k}^{(n)}}^{(n)}) \right|^p \1_{\Omega_{k,n,\varepsilon_n}^c} \right]ds \notag\\
=& \sum_{k=0}^{n-1}\int_{t_{k}^{(n)}}^{t_{k+1}^{(n)}}\e\left[\left|f(X_s^{(n)})-f(X_{t_{k}^{(n)}}^{(n)}) \right|^p \1_{\Omega_{k,n,\varepsilon_n}^c} \1_{X^{(n)}_s\in  S^{\varepsilon_n}(f)}\right]ds \notag  \\
& \qquad + \sum_{k=0}^{n-1}\int_{t_{k}^{(n)}}^{t_{k+1}^{(n)}}\e\left[\left|f(X_s^{(n)})-f(X_{t_{k}^{(n)}}^{(n)}) \right|^p \1_{\Omega_{k,n,\varepsilon_n}^c} \1_{X^{(n)}_s \not \in S^{\varepsilon_n}(f)}\right]ds. \label{eqnL1} 
\end{align}
On the set $\Omega_{k,n,\varepsilon_n}^c \cap \big\{ X^{(n)}_s \not \in S^{\varepsilon_n}(f)\big\}$, it holds that $S(f) \cap [X^{(n)}_s \wedge X^{(n)}_{t^{(n)}_k}, X^{(n)}_s \vee X^{(n)}_{t^{(n)}_k}] =\emptyset$, thus, 
$$
\left|f(X_s^{(n)})-f(X_{t_{k}^{(n)}}^{(n)}) \right|^p \1_{\Omega_{k,n,\varepsilon_n}^c} \1_{X^{(n)}_s \not \in S^{\varepsilon_n}(f)}
\leq \|f\|_\beta^p \left|X_s^{(n)}- X_{t_{k}^{(n)}}^{(n)} \right|^{p\beta}.
$$ 
This implies the second term of \eqref{eqnL1} is bounded by 
\begin{align} \label{eqnL2}
\|f\|_\beta^p \sum_{k=0}^{n-1}\int_{t_{k}^{(n)}}^{t_{k+1}^{(n)}}\e\left[ \left|X_s^{(n)}- X_{t_{k}^{(n)}}^{(n)} \right|^{p\beta} \right]ds
\leq \|f\|_\beta^p T C_{p\beta}n^{-p\beta/2},
\end{align}
where the last inequality follows from Lemma \ref{Lem_1}. 
For each constant $K_n \geq 1\vee (|x_0|+ T\|b\|_\infty)$,
the first term of \eqref{eqnL1} is bounded by
\begin{align}
&2^p\|f\|_\infty^p \sum_{k=0}^{n-1}\int_{t_{k}^{(n)}}^{t_{k+1}^{(n)}} \Big( \e\left[ \1_{X^{(n)}_s\in  S^{\varepsilon_n}(f)\cap[-K_n,K_n]}\right]+ \e\left[  \1_{X^{(n)}_s\in  S^{\varepsilon_n}(f)\backslash[-K_n,K_n]}\right]\Big)ds \notag \\
\leq & 2^p\|f\|_\infty^p \int_0^T \e\left[ \1_{X^{(n)}_s\in  S^{\varepsilon_n}(f)\cap[-K_n,K_n]}\right]ds + 2^p\|f\|_{\infty}^p\int_0^T \e\left[  \1_{|X^{(n)}_s| \geq K_n}\right]ds. \label{eqnL3}
\end{align}
Since $\sigma$ is uniformly elliptic, $ \langle X^{(n)}\rangle_t \geq \underline{\sigma}^2 t$, we obtain 
\begin{align*}
\int_0^T \e\left[ \1_{X^{(n)}_s\in  S^{\varepsilon_n}(f)\cap[-K_n,K_n]}\right]ds &\leq \underline{\sigma}^{-2}  \e\left[ \int_0^T  \1_{X^{(n)}_s\in  S^{\varepsilon_n}(f)\cap[-K_n,K_n]} d\langle X^{(n)}\rangle_s\right] \\
& = \underline{\sigma}^{-2} \e\left[ \int_\real  \1_{ S^{\varepsilon_n}(f)\cap[-K_n,K_n]}(x)L_T^{x}(X^{(n)})dx \right],
\end{align*}
where the last equation follows from the occupation time formula.
Moreover, it follows from Lemma \ref{local_time} that 
\begin{align*}
\e\left[ \int_\real  \1_{S^{\varepsilon_n}(f)\cap[-K_n,K_n]}(x) L_T^{x}(X^{(n)})dx \right] &\leq \int_\real \1_{S^{\varepsilon_n}(f)\cap[-K_n,K_n]}(x) \e[L_T^{x}(X^{(n)})]dx \\
&\leq \sup_{x \in \real} \e[L_T^{x}(X^{(n)})] \lambda\Big(S^{\varepsilon_n}(f)\cap[-K_n,K_n]\Big)\\
&\leq \{12\|b\|_{\infty}^2T^2+ 6 \overline{\sigma}^2 T\}^{1/2} C_{\beta,\kappa} K_n\varepsilon_n^\kappa.
\end{align*}
Now we consider the second term of \eqref{eqnL3}. For each $s \in [0,T]$,
\begin{align*}
\e\left[  \1_{|X^{(n)}_s| \geq K_n}\right] 
&\leq  \p \Big( \Big| \int_0^s \sigma(X^{(n)}_{\eta_n(u)}) dW_u\Big| \geq K_n - \Big| x_0+ \int_0^s b(X^{(n)}_{\eta_n(u)}) du\Big|\Big)\\
&\leq \p \Big( \Big| \int_0^s \sigma(X^{(n)}_{\eta_n(u)}) dW_u\Big| \geq K_n - \|b\|_\infty T - |x_0|\Big).
\end{align*}
Since $\langle \int_{0}^{\cdot} \sigma(X_{\eta_n(s)}^{(n)})dW_s \rangle_t \leq \overline{\sigma}^2 T$ almost surely, from Proposition 6.8 of \cite{Shigekawa} and the inequality $(a-b)^2 \geq a^2/2-b^2$ for any $a,b \in \real$, we have
	\begin{align*}
		&\p\left(\sup_{0 \leq t \leq T} \left| \int_{0}^{t} \sigma(X_{\eta_n(s)}^{(n)})dW_s \right|\geq K_n -\|b\|_{\infty}T - |x_0|\right) \notag\\
		&\leq 2\exp \left(-\frac{(K_n-|x_0|-\|b\|_{\infty}T)^2}{2\overline{\sigma}^2T}\right)
		\leq 2\exp \left(\frac{(|x_0|+\|b\|_{\infty}T)^2}{2\overline{\sigma}^2T}\right) \exp\left(-\frac{K_n^2}{4\overline{\sigma}^2T}\right).
	\end{align*}
This implies
\begin{align} \label{key_sigma_13}
\int_0^T \e\left[  \1_{|X^{(n)}_s| \geq K_n}\right]ds \leq 2T\exp \left(\frac{(|x_0|+\|b\|_{\infty}T)^2}{2\overline{\sigma}^2T}\right) \exp\left(-\frac{K_n^2}{4\overline{\sigma}^2T}\right).
\end{align}	
Gathering together the estimates \eqref{key_sigma_1} --\eqref{key_sigma_13}, we get
\begin{align}
\int_{0}^{T}\e\left[\left|f(X_s^{(n)})-f(X_{\eta_n(s)}^{(n)})\right|^p \right]ds 
\leq  &  2^p\|f\|_{\infty}^pT  \gamma_n + \|f\|_\beta^p  TC_{p\beta}n^{-p\beta/2} \notag\\
& + 2^p \|f\|_{\infty}^p \underline{\sigma}^{-2} \{12\|b\|_{\infty}^2T^2+6 \overline{\sigma}^2 T\}^{1/2} C_{\beta,\kappa} K_n\varepsilon_n^\kappa \notag\\
& + 2^{p+1} \|f\|_{\infty}^p T\exp \left(\frac{(|x_0|+\|b\|_{\infty}T)^2}{2\overline{\sigma}^2T}\right) \exp\left(-\frac{K_n^2}{4\overline{\sigma}^2T}\right). \label{eqnL6}
\end{align}
For each $0< \alpha < \frac{p\beta}{2} \wedge \frac{2\kappa}{\kappa+4}$, by choosing  
$K_n = (1+|x_0|+T\|b\|_\infty + 2\overline{\sigma}\sqrt{T\alpha}) \sqrt{\log n}$ and $\gamma_n = \frac{1}{n^\alpha \log n}$, we obtain \eqref{eqnL0} from \eqref{eqnL6}. 
\end{proof}

\subsection{Method of removal of  drift}
The following removal of drift transformation plays a crucial role in our argument. 
Suppose that $b \in L^1(\real)$. 
The function $\varphi (x) := \int_0^x \exp\Big(-2 \int_0^y \frac{b(z)}{\sigma^2(z)} dz \Big) dy$ is well-defined since $\sigma^2$ is uniformly elliptic.
Define $Y_t:=\varphi(X_t)$ and $Y_t^{(n)}:=\varphi(X_t^{(n)})$.
Then by It\^o's formula we have
\begin{align*}
	Y_t
	= \varphi(x_0) + \int_0^t \varphi'(X_s) \sigma(X_s)dW_s,
\end{align*}
and
\begin{align*}
	Y_t^{(n)}
	= \varphi(x_0)
	+\int_0^t \left( \varphi'(X_s^{(n)}) b(X_{\eta_n(s)}^{(n)})+\frac{1}{2}\varphi''(X_s^{(n)}) \sigma^2(X_{\eta_n(s)}^{(n)}) \right) ds
	+ \int_0^t \varphi'(X_s^{(n)}) \sigma(X_{\eta_n(s)}^{(n)})dW_s.
\end{align*}

To simplify the notation, we denote $K_\sigma = \overline{\sigma} \vee \underline{\sigma}^{-1}$ and $C_0 = e^{2\ksm^2 \|b\|_{L^1(\real)}}$. We will make repeated use of the following elementary lemma.
\begin{Lem}(\cite{NT_2015_2}) \label{PDE_2}
	Suppose that $b \in L^1(\real)$ and Assumption \ref{Ass_1}-(ii) holds. 
	
	\begin{itemize} \item[(i)] For any $x \in \real$,
$	C_0^{-1} \leq \varphi'(x)=\exp\Big(-2 \int_0^x \frac{b(z)}{\sigma^2(z)} dz \Big) \leq C_0.$
	\item[(ii)] For any $x \in \real$, 
	$|\varphi''(x)| \leq 2\ksm^2 \|b\|_{\infty} \|\varphi'\|_{\infty} \leq 2\|b\|_\infty \ksm^2 C_0.$
	\item[(iii)] For any $z,w \in Dom(\varphi^{-1})$,
	\begin{align}\label{PDE_4}
	|\varphi^{-1}(z)-\varphi^{-1}(w)|
	\leq C_0 |z-w|.
	\end{align}
	\end{itemize}
\end{Lem}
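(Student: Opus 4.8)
The plan is to verify all three assertions directly from the explicit formula for $\varphi'$, since each reduces to an elementary calculus manipulation once the exponent appearing in $\varphi'$ is controlled. The only hypotheses in play are the uniform ellipticity $\underline{\sigma}\le\sigma\le\overline{\sigma}$ from Assumption \ref{Ass_1}-(ii) and the integrability (and boundedness) of $b$, so the single quantity I need to estimate is the exponent $-2\int_0^x \frac{b(z)}{\sigma^2(z)}\,dz$.

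For part (i), I would first record that $\varphi'(x)=\exp\big(-2\int_0^x \frac{b(z)}{\sigma^2(z)}\,dz\big)$ by the fundamental theorem of calculus. Bounding the exponent in absolute value by integrating $|b|$ over the interval between $0$ and $x$ gives
\[
\Big|2\int_0^x \frac{b(z)}{\sigma^2(z)}\,dz\Big|\le \frac{2}{\underline{\sigma}^2}\|b\|_{L^1(\real)}\le 2\ksm^2\|b\|_{L^1(\real)},
\]
where I use $\sigma^2\ge\underline{\sigma}^2$ together with $\ksm^2=(\overline{\sigma}\vee\underline{\sigma}^{-1})^2\ge\underline{\sigma}^{-2}$. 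Exponentiating the two-sided bound on the exponent yields $C_0^{-1}\le\varphi'(x)\le C_0$ with $C_0=e^{2\ksm^2\|b\|_{L^1(\real)}}$, as claimed.

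For part (ii), I would differentiate the explicit formula once more by the chain rule to obtain $\varphi''(x)=-\frac{2b(x)}{\sigma^2(x)}\varphi'(x)$, so that $|\varphi''(x)|\le\frac{2\|b\|_\infty}{\underline{\sigma}^2}\|\varphi'\|_\infty\le 2\ksm^2\|b\|_\infty\|\varphi'\|_\infty$; the final bound $2\ksm^2\|b\|_\infty C_0$ then follows by inserting $\|\varphi'\|_\infty\le C_0$ from part (i). For part (iii), the positivity $\varphi'>0$ makes $\varphi$ strictly increasing and hence a bijection onto its range, so $\varphi^{-1}$ is differentiable with $(\varphi^{-1})'(y)=1/\varphi'(\varphi^{-1}(y))\le C_0$, again by the lower bound in part (i). Integrating this derivative estimate (equivalently, applying the mean value theorem) gives $|\varphi^{-1}(z)-\varphi^{-1}(w)|\le C_0|z-w|$ on $\mathrm{Dom}(\varphi^{-1})$.

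There is no substantial obstacle here, as all three statements collapse to controlling one exponential integral. The only points demanding a little care are the sign of $x$ in part (i) — the estimate must be phrased as a bound on the integral of $|b|$ over the interval with endpoints $0$ and $x$, so that the full $L^1$-norm dominates regardless of the sign of $x$ — and the (tacit) requirement that $b$ be bounded in addition to integrable, which is what makes the factor $\|b\|_\infty$ in part (ii) finite.
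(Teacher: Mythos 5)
Your proof is correct, and there is nothing in the paper to compare it against line by line: the authors state Lemma \ref{PDE_2} with a citation to \cite{NT_2015_2} and give no proof here, so your direct verification from the explicit formula for $\varphi'$ is exactly the intended (and standard) argument. All three steps check out: the two-sided exponent bound $\big|2\int_0^x b(z)\sigma^{-2}(z)\,dz\big| \leq 2\underline{\sigma}^{-2}\|b\|_{L^1(\real)} \leq 2\ksm^2\|b\|_{L^1(\real)}$ uses precisely $\sigma^2 \geq \underline{\sigma}^2$ and $\ksm \geq \underline{\sigma}^{-1}$, and exponentiating gives (i) with the paper's constant $C_0 = e^{2\ksm^2\|b\|_{L^1(\real)}}$; your handling of the sign of $x$ and of the tacit boundedness of $b$ (which the paper does assume throughout, since $b \in H^{\beta,\kappa}$ implies $\|b\|_\infty < \infty$) are the right points of care; and your part (iii), via $(\varphi^{-1})'(y) = 1/\varphi'(\varphi^{-1}(y)) \leq C_0$, matches how the paper later uses \eqref{PDE_4}. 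The one caveat worth flagging in part (ii): since $b$ is merely measurable (and possibly discontinuous), $\varphi'$ is Lipschitz and absolutely continuous rather than $C^1$, so the chain-rule identity $\varphi''(x) = -2b(x)\sigma^{-2}(x)\varphi'(x)$ holds only at Lebesgue points of $b/\sigma^2$, i.e.\ almost everywhere, not pointwise for every $x$. This is harmless for the paper's purposes --- $\varphi''$ only ever appears inside $ds$-integrals (in $I_t^{(n)}$ and via the generalized It\^o formula applied to $\varphi$) --- but a fully rigorous write-up should state the identity and the bound in (ii) in the a.e.\ sense.
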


\subsection{Yamada and Watanabe approximation technique}
Under the Assumption 2.2, by using the Yamada-Watanabe approximation technique, Le Gall \cite{LeGall} show that the pathwise uniequness holds for SDE (1). We also use this technique to prove the main result
(see \cite{Yamada} or \cite{Gyongy}).
For each $\delta \in (1,\infty)$ and $\varepsilon \in (0,1)$, we define a continuous function $\psi _{\delta, \varepsilon}: \real \to \real^+$ with $\text{supp}\: \psi _{\delta, \varepsilon}  \subset [\varepsilon/\delta, \varepsilon]$ such that
$\int_{\varepsilon/\delta}^{\varepsilon} \psi _{\delta, \varepsilon}(z) dz
= 1 \text{ and } 0 \leq \psi _{\delta, \varepsilon}(z) \leq \frac{2}{z \log \delta}, \:\:\:z > 0.$
Since $\int_{\varepsilon/\delta}^{\varepsilon} \frac{2}{z \log \delta} dz=2$, there exists such a function $\psi_{\delta, \varepsilon}$.
We define a function $\phi_{\delta, \varepsilon} \in C^2(\real;\real)$ by
$\phi_{\delta, \varepsilon}(x):=\int_0^{|x|}\int_0^y \psi _{\delta, 
	\varepsilon}(z)dzdy.$
It is easy to verify that $\phi_{\delta, \varepsilon}$ has the following useful properties: 
\begin{align} 
&|x| \leq \varepsilon + \phi_{\delta, \varepsilon}(x), \text{ for any $x \in \real $}, \label{phi3}\\ 
&0 \leq |\phi'_{\delta, \varepsilon}(x)| \leq 1, \text{ for any $x \in \real$} \label{phi2}, \\
\phi''_{\delta, \varepsilon}(\pm|x|)&=\psi_{\delta, \varepsilon}(|x|)
\leq \frac{2}{|x|\log \delta}{\bf 1}_{[\varepsilon/\delta, \varepsilon]}(|x|), 
\text{ for any $x \in \real \setminus\{0\}$}. \label{phi4}
\end{align}
From \eqref{PDE_4} and \eqref{phi3}, for any $t \in [0,T]$, we have
\begin{align}\label{esti_X1}
	|X_t-X_t^{(n)}|
	\leq C_0 |Y_t-Y_t^{(n)}|
	\leq C_0 \left( \varepsilon + \phi_{\delta,\varepsilon}(Y_t-Y_t^{(n)}) \right).
\end{align}
Using It\^o's formula, we have
\begin{align}\label{esti_X2}
	\phi_{\delta,\varepsilon}(Y_t-Y_t^{(n)})
	=M_t^{n,\delta,\varepsilon}
	+I_t^{(n)}
	+J_t^{(n)},
\end{align}
where
\begin{align*}
M_t^{n,\delta,\varepsilon}
&:=\int_0^t \phi'_{\delta,\varepsilon}(Y_s-Y_s^{(n)}) \left\{ \varphi'(X_s)\sigma(X_s) - \varphi'(X_s^{(n)}) \sigma(X_{\eta_n(s)}^{(n)}) \right\}dW_s,\\
I_t^{(n)}
&:=-\int_0^t \phi'_{\delta,\varepsilon}(Y_s-Y_s^{(n)})
\left\{ \varphi'(X_s^{(n)})b(X_{\eta_n(s)}^{(n)}) +\frac{1}{2} \varphi''(X_s^{(n)}) \sigma^2(X_{\eta_n(s)}^{(n)}) \right\} ds,\\
J_t^{(n)}
&:=\frac{1}{2}\int_0^t \phi''_{\delta,\varepsilon}(Y_s-Y_s^{(n)})
\left| \varphi'(X_s) \sigma(X_s) - \varphi'(X_s^{(n)}) \sigma(X_{\eta_n(s)}^{(n)})  \right|^2 ds.
\end{align*}

\subsection{Proof of Theorem \ref{Main_1}}
We will only present the detail proof for the case that $b \in L^1(\real)$. The proof for the case  $b \not \in L^1(\real)$ is based on the  localisation technique given in \cite{NT_2015_2} and it will be omitted. 

We fix $n \geq 3$ and a constant $0< \alpha < \frac{\beta}{2} \wedge \frac{2\kappa}{\kappa+4}$.
We first consider $I_t^{(n)}$.
Since $\varphi'' = - \frac{2b\varphi'}{\sigma^2}$, 
\begin{align}
	|I_t^{(n)}|
	&\leq \int_0^T \left|\phi'_{\delta,\varepsilon}(Y_t-Y_t^{(n)}) \varphi'(X_s^{(n)}) \right|
	\left|b(X_{\eta_n(s)}^{(n)}) - \frac{b(X_s^{(n)}) \sigma^2(X_{\eta_n(s)}^{(n)})}{\sigma^2(X_s^{(n)})} \right| ds. \notag
\end{align}
Thanks to Lemma \ref{PDE_2} and estimate \eqref{phi2}, we have 
\begin{align}
	|I_t^{(n)}|
	&\leq \ksm^2 C_0 \int_0^T
	\left|b(X_{\eta_n(s)}^{(n)}) \sigma^2(X_s^{(n)}) - b(X_s^{(n)}) \sigma^2(X_{\eta_n(s)}^{(n)}) \right| ds \notag\\
	&\leq \ksm^2 C_0 
	\int_0^T \left\{
	\ksm^2 \left|b(X_s^{(n)}) - b(X_{\eta_n(s)}^{(n)}) \right|
	+\|b\|_{\infty} \left|  \sigma^2(X_s^{(n)}) - \sigma^2(X_{\eta_n(s)}^{(n)}) \right|
	\right\}ds. \notag
\end{align}
It follows from Lemma  \ref{key_sigma_0} that
\begin{equation} \label{eqnL7}
\e[|I_t^{(n)}|] \leq \frac{C_I}{n^\alpha \log n},
\end{equation}
where $C_I:=K_{\sigma}^2 C_0\{K_{\sigma}^2 C_1^*(b)+2\|b\|_{\infty} \overline{\sigma} C_1^*(\sigma) \}$.
Now we estimate $J_t^{(n)}$.
From \eqref{phi4}, we have
\begin{align*}
J_t^{(n)}
&\leq \int_0^T \frac{\1_{[\varepsilon/\delta,\varepsilon]}(|Y_s-Y_s^{(n)}|)}{|Y_s-Y_s^{(n)}| \log \delta}
\left| \varphi'(X_s) \sigma(X_s) - \varphi'(X_s^{(n)}) \sigma(X_{\eta_n(s)}^{(n)})  \right|^2 ds\\
&\leq 3(J_T^{1,n}+J_T^{2,n}+J_T^{3,n}),
\end{align*}
where
\begin{align*}
J_t^{1,n}
&:= \int_0^t \frac{\1_{[\varepsilon/\delta,\varepsilon]}(|Y_s-Y_s^{(n)}|)}{|Y_s-Y_s^{(n)}| \log \delta}
|\sigma(X_s)|^2 \left| \varphi'(X_s) - \varphi'(X_s^{(n)}) \right|^2 ds,\\
J_t^{2,n}
&:=\int_0^t \frac{\1_{[\varepsilon/\delta,\varepsilon]}(|Y_s-Y_s^{(n)}|)}{|Y_s-Y_s^{(n)}| \log \delta}
|\varphi'(X_s^{(n)})|^2 \left| \sigma(X_s) - \sigma(X_s^{(n)})  \right|^2 ds, \\
J_t^{3,n}
&:=\int_0^t \frac{\1_{[\varepsilon/\delta,\varepsilon]}(|Y_s-Y_s^{(n)}|)}{|Y_s-Y_s^{(n)}| \log \delta}
|\varphi'(X_s^{(n)})|^2 \left| \sigma(X_s^{(n)}) - \sigma(X_{\eta_n(s)}^{(n)})  \right|^2 ds.
\end{align*}
From Lemma \ref{PDE_2}-(ii), $\varphi'$ is Lipschitz continuous with Lipschitz constant $\|\varphi''\|_{\infty}$.
Hence, we have
\begin{align}\label{esti_J1}
	J_T^{1,n}
	&\leq \frac{\ksm^2 \|\varphi''\|_{\infty}^2}{\log \delta} \int_0^T \frac{\1_{[\varepsilon/\delta,\varepsilon]}(|Y_s-Y_s^{(n)}|)}{|Y_s-Y_s^{(n)}|}
	\left| X_s - X_s^{(n)} \right|^2 ds \notag\\
	&\leq \frac{\ksm^2 \|\varphi''\|_{\infty}^2 C_0^2}{\log \delta}
	\int_0^T\1_{[\varepsilon/\delta,\varepsilon]}(|Y_s-Y_s^{(n)}|) \left| Y_s - Y_s^{(n)} \right| ds \notag\\
	&\leq \frac{C_{J,1} \varepsilon}{\log \delta},
\end{align}
where $C_{J,1}:=4\ksm^6 C_0^4 \|b\|_\infty^2 T$.
Next we consider $J_T^{2,n}$. We first note that by \eqref{PDE_4},
$$J_T^{2,n} \leq \frac{C^3_0}{\log \delta} \int_0^T \frac{\left| \sigma(X_s) - \sigma(X_s^{(n)})  \right|^2}{|X_s-X_s^{(n)}|} \1_{|X_s- X^{(n)}_s|\geq  \varepsilon/(C_0\delta)} ds.$$
Recall that by Assumption \ref{Ass_1}-(i), there exists a bounded and strictly increasing function $f_{\sigma} : \real \to \real$ such that for any $x,y \in \real$,
\begin{align*}
|\sigma(x)-\sigma(y)|^2
\leq |f_{\sigma}(x)-f_{\sigma}(y)|.
\end{align*}
We consider approximation $f_{\sigma,\ell} \in C^1(\real)$ of $f_{\sigma}$ which is also strictly increasing function and satisfies $\|f_{\sigma, \ell}\|_{\infty} \leq \|f_{\sigma}\|_{\infty}$ and $f_{\sigma,\ell} \uparrow f_{\sigma}$ as $\ell \to \infty$ on $\real$.
Then by using Fatou's lemma and the mean value theorem, we have
\begin{align}\label{pr_1_5}
J_T^{2,n}
&\leq \frac{C^3_0}{\log \delta}
\int_0^T
\frac{|f_{\sigma}(X_s)-f_{\sigma}(X_s^{(n)})|}{|X_s-X_s^{(n)}|}
\1_{|X_s- X^{(n)}_s|> \varepsilon/(C_0\delta)} ds \notag\\
&\leq \liminf_{\ell \to \infty}
\frac{C^3_0}{\log \delta}
\int_0^T
\frac{|f_{\sigma, \ell}(X_s)-f_{\sigma, \ell}(X_s^{(n)})| }{|X_s-X_s^{(n)}|} \1_{|X_s- X^{(n)}_s|> \varepsilon/(C_0\delta)} ds \notag\\
&\leq  \liminf_{\ell \to \infty}
\frac{C_0^3}{\log \delta}	
\int_0^T ds \int_0^1 d\theta f'_{\sigma, \ell}(V_s^{(n)}(\theta)),
\end{align}
where $V^{(n)}(\theta)=(V_t^{(n)}(\theta))_{0 \leq t \leq T}$ is defined in Lemma \ref{local_time}. 
Since $\sigma \geq \underline{\sigma}$, the quadratic variation of $V^{(n)}(\theta)$ satisfies
\begin{align*}
\langle V^{(n)}(\theta) \rangle_t
= \int_{0}^{t} \left\{ (1-\theta)\sigma(X_s) + \theta \sigma(X_{\eta_n(s)}^{(n)}) \right\}^2 ds
\geq \underline{\sigma}^2 t,
\end{align*} 
which implies
\begin{align*}
\int_0^T ds \int_0^1 d \theta f'_{\sigma, \ell}(V_s^{(n)}(\theta))
&\leq
\underline{\sigma}^{-2} \int_0^1 d \theta \int_0^T d \langle V^{(n)}(\theta) \rangle_{s} f'_{\sigma, \ell}(V_s^{(n)}(\theta)) \notag\\
&= \underline{\sigma}^{-2} \int_{\real} dx f'_{\sigma, \ell}(x) \int_0^1 d \theta L_T^x(V^{(n)}(\theta)),
\end{align*}
where the last equality is implied from the occupation time formula. Using  Lemma \ref{local_time} and  the estimate $\|f'_{\sigma, \ell}\|_{L^1(\real)} \leq 2 \|f_{\sigma, \ell}\|_{\infty} \leq 2 \|f_{\sigma} \|_{\infty}$ we have
\begin{align*}
\e\left[
\int_0^T ds \int_0^1 d \theta f'_{\sigma, \ell}(V_s^{(n)}(\theta))
\right]
&\leq \underline{\sigma}^{-2} \int_{\real} dx f'_{\sigma, \ell}(x) \int_0^1 d \theta
\e [L_T^x(V^{(n)}(\theta))] \\
&\leq \underline{\sigma}^{-2} \|f'_{\sigma, \ell}\|_{L^{1}(\real)}
\sup_{\theta \in [0,1], x \in \real}
\e [|L_T^x(V^{(n)}(\theta))|^2]^{1/2}\\
&\leq 2 \underline{\sigma}^{-2} \|f_{\sigma}\|_{\infty} 
\{ 12\|b\|_{\infty}^2T^2+6 \overline{\sigma}^2 T\}^{1/2}.
\end{align*}
By plugging this estimate to \eqref{pr_1_5} and using Fatou's lemma, we get the following estimate for the expectation of $J^{2,n}_T$,
\begin{align}\label{pr_1_7}
\e[J^{2,n}_T]
&\leq \frac{C_{J,2}}{\log \delta},
\end{align}
where $C_{J,2}:=2C_0^3\underline{\sigma}^{-2} \|f_{\sigma}\|_{\infty} \{ 12\|b\|_{\infty}^2T^2+6 \overline{\sigma}^2 T\}^{1/2}$.
Finally, we estimate $J^{3,n}_T$ as follows
\begin{align*}
\e[J^{3,n}_T] \leq \frac{C_0^2 \delta}{\varepsilon \log \delta} \int_0^T \e\Big[\left|\sigma(X^{n}_s)- \sigma(X^{(n)}_{\eta_n(s)})\right|^2\Big]ds. 
\end{align*}
Applying Lemma \ref{key_sigma_0}, we get 
\begin{equation} \label{eqnL8}
\e[J^{3,n}_t] \leq \frac{\delta}{\varepsilon \log \delta} \frac{C_{J,3}}{n^\alpha \log n},
\end{equation}
where $C_{J,3}:=C_0^2 C_2^*(\sigma)$.
Since $\e [M_t^{n,\delta, \varepsilon}] = 0$, it follows from \eqref{esti_X1} -- \eqref{eqnL8} that  there exists a positive constant $C$ which do not depend on $n$ such that
\begin{align*}
\sup_{0\leq t \leq T}\e[|X_t - X^{(n)}_t|] \leq C \Big( \varepsilon + \frac{1}{n^\alpha \log n} + \frac{\varepsilon}{\log \delta} + \frac{1}{\log \delta} + \frac{\delta}{\varepsilon \log \delta} \frac{1}{n^\alpha \log n}\Big).
\end{align*}
By choosing $\varepsilon = \frac{1}{\log n}$ and $\delta = n^\alpha$, we obtain the desired result.
\qed

\section*{Acknowledgements}
The authors thank  Arturo Kohatsu-Higa, Miguel Martinez and Toshio Yamada for their helpful comments. This research is funded by Vietnam National Foundation for Science and Technology Development (NAFOSTED).
The second author was supported by JSPS KAKENHI Grant Number 16J00894.

\end{document}